\newtheorem{Theorem} {Theorem} [section]
\newtheorem{Proposition} [Theorem] {Proposition}
\newtheorem{Lemma} [Theorem] {Lemma}
\newtheorem{Corollary} [Theorem] {Corollary}
\newtheorem{Example} [Theorem] {Example}
\newcommand{\Ff}{{\mathbb F}}
\newcommand{\cB}{{\mathcal B}}
\newcommand{\cC}{{\mathcal C}}
\newcommand{\cF}{{\mathcal F}}
\newcommand{\cM}{{\mathcal M}}
\newcommand{\cP}{{\mathcal P}}
\newcommand{\cR}{{\mathcal R}}
\newcommand{\cS}{{\mathcal S}}
\newcommand{\cT}{{\mathcal T}}
\newcommand{\PGL}{\mathrm{PGL}}
\newcommand{\GL}{\mathrm{GL}}
\newcommand{\Stab}{\mathrm{Stab}}
\newcommand{\<}{\langle}
\renewcommand{\>}{\rangle} 
\renewcommand{\phi}{\varphi} 
\title{On MSR Subspace Families of Lines}
\author{
Ferdinand Ihringer
}
\date{24 October 2023}
\begin{document}
\maketitle

\begin{abstract}
  A minimum storage regenerating (MSR) 
  subspace family of $\Ff_q^{2m}$ is a set $\cS$ of $m$-spaces
  in $\Ff_q^{2m}$ such that for any $m$-space $S$ in $\cS$ there 
  exists an element in $\PGL(2m, q)$ which maps $S$ to a complement
  and fixes $\cS \setminus \{ S \}$ element-wise.
  We show that an MSR subspace family of $2$-spaces in $\Ff_q^4$
  has at most size $6$ with equality if and only if it is 
  a particular subset of a Segre variety.
  This implies that an $(k{+}2, k, 4)$-MSR code has $k \leq 7$.
\end{abstract}

\section{Introduction}

Distributed storage systems (DSS) require codes which are particularly 
suited for dealing with the unavailability of some storage nodes.
This leads to the investigation of \textit{minimum storage regenerating (MSR)
codes}. It has been shown that MSR codes are closely linked
to MSR subspace families, cf.~\cite{AG2021,TWB2014,WTB2012},
which we will define below. Here we obtain a precise upper bound
on the size of $(k+2,k,4)$-MSR codes, to our knowledge the first open case,
using techniques from finite geometry.

Let $\GL(N, q)$ denote the general linear group over the field with $q$ elements,
that is the set of intervertible matrices over $\Ff_q$. 
We denote the multiplicative group of $\Ff_q$ by $\Ff_q^*$.
Let $\PGL(N, q) = \GL(N, q)/\Ff_q^*$
denote the corresponding projective general linear group.
In the below we use exponential notation for group actions.

Consider $\Ff_q^{rm}$.
We want to find a family $\cF$ of 
$m$-spaces $S_i$ such that there exist 
$g_{i,j} \in \PGL(rm, q)$, where $j \in \{ 1, \ldots, r-1 \}$, such that for all distinct $i,i'$,
we have that $S_i + S_i^{g_{i,1}} + \ldots + S_i^{g_{i,r-1}} = \Ff_q^{rm}$ and
that $S_{i'}^{g_{i,j}} = S_{i'}$. We call a family $\{ S_1, \ldots, S_k \}$ such that 
such $g_{i,j}$ exist a \textit{minimum storage regenerating (MSR) subspace family}
or $(rm, r)$-MSR subspace family, cf.\ \cite{AG2021}.
Note that in \cite{AG2021} only $g_{i,j} \in \mathrm{GL}(rm, q)$ is required,
but this makes no difference as we only consider actions on subspaces.
Whenever we limit ourselves to the case that $r=2$,
we write $g_i$ instead of $g_{i,1}$
and $S_i + S_i^{g_{i}} = \Ff_q^{2m}$ is 
equivalent to $S_i \cap S_i^{g_i}$ being trivial.
In this document matrices act by left multiplication.
Throughout the document, let $e_i$ denote 
the $i$-th vector of the canonical basis.
We use projective notation and call $1$-spaces \textit{points},
$2$-spaces \textit{lines}, and $3$-spaces \textit{planes}.
We also say that two subspace are disjoint if their intersection 
is $\{ 0 \}$.

Theorem 3 of \cite{AG2021} states that an $(rm, r)$-MSR
subspace family has size at most $2 \ln(rm)/\ln(\frac{r^2}{r^2-r+1})$.
A detailed 
analysis of the proof of Theorem 3 of \cite{AG2021} 
gives an upper bound of $7$,
see \S\ref{sec:rthree}.
Our main result is the following:

\begin{Theorem}\label{thm:MSR_n_2}
  A $(4,2)$-MSR subspace family has size at most $6$. In case of 
  equality, $q \geq 3$ and the family is isomorphic to
  \begin{align*}
      &\< e_1, e_3 \>, &&\< e_2, e_4 \>, &&\< e_1 {+} e_2, e_3{+}e_4\>,\\
      &\< e_1, e_2 \>, &&\< e_3, e_4 \>, &&\< e_1 {+} e_3, e_2{+}e_4\>.
  \end{align*}
\end{Theorem}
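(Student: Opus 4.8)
The plan is to work projectively in $\PG(3,q)$, where the family becomes a set of lines $S_1,\dots,S_k$ and the condition $S_i + S_i^{g_i} = \Ff_q^4$ says exactly that $S_i$ and its image $S_i^{g_i}$ are skew. I would begin with a local analysis around each line. Fix $i$ and set $U = S_i$ and $U' = S_i^{g_i}$, so $U \oplus U' = \Ff_q^4$. Since $g_i$ fixes every other family line $S_j$ setwise while carrying $U$ to $U'$, one has $g_i(S_j \cap U) = S_j^{g_i} \cap U^{g_i} = S_j \cap U'$, and hence $\dim(S_j\cap U) = \dim(S_j \cap U')$. Because $U\cap U' = 0$ and $S_j$ is a line, this common dimension is $0$ or $1$: every other family line is either skew to both $U$ and $U'$, or is a \emph{transversal} meeting each of $U,U'$ in a point, with $g_i$ carrying the $U$-point to the $U'$-point. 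Two consequences I would record. First, a transversal is determined by its point on $U$ (its $U'$-point is the image under the fixed isomorphism $g_i|_U\colon U \to U'$), so the family lines meeting $S_i$ pass through distinct points of $S_i$. Second, any two family lines meeting $S_i$ span $U + U' = \Ff_q^4$ and are therefore skew to each other; thus in the \emph{meeting graph} (vertices $=$ family lines, edges $=$ pairs that meet) the neighbourhood of every vertex is an independent set, i.e.\ the meeting graph is triangle-free.

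The second step is a rigidity statement for pairwise skew lines. If three family lines are mutually skew, then any collineation fixing all three must fix their entire opposite regulus linewise: it permutes the $q+1$ common transversals and fixes the three points these cut on the opposite regulus, so it acts trivially there (three fixed points on a projective line force the identity). I would then identify the linewise stabiliser of a regulus explicitly: writing $\Ff_q^4 = \Ff_q^2 \otimes \Ff_q^2$ so that the regulus is $\{\Ff_q^2 \otimes b\}$, an element fixing every $\Ff_q^2 \otimes b$ is forced, by comparing its action across the $q+1$ independent directions $b$, to have the tensor form $\phi \otimes I$ with $\phi \in \GL(2,q)$. So three mutually skew family lines lie in a regulus of a hyperbolic quadric $Q$, and any family map fixing all three is of type $\phi\otimes I$ and preserves $Q$. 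This already caps a single regulus at three lines: a fourth line in the same regulus has a map $g$ fixing the other three, hence fixing the whole regulus linewise and thus fixing the fourth line as well, contradicting that $g$ moves it to a skew line.

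I would then assemble the global bound. The triangle-free meeting graph splits into two cases by its independence number $\alpha$. If $\alpha \le 2$ then, since the graph is triangle-free, $R(3,3)=6$ forces $k \le 5 \le 6$. If $\alpha \ge 3$, pick three pairwise skew family lines $\ell_1,\ell_2,\ell_3$; they determine a hyperbolic quadric $Q$ with $\{\ell_1,\ell_2,\ell_3\}$ in one regulus and the opposite regulus equal to their common transversals. The goal is to show every remaining family line lies on $Q$ (in the opposite regulus if it meets all three, in the first regulus otherwise), which together with the "at most three per regulus" bound yields $k \le 6$; a line meeting all of $\ell_1,\ell_2,\ell_3$ is automatically such a common transversal, so the content is to exclude \emph{mixed} lines that meet some $\ell_j$ but are skew to others. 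Here I would feed the rigid form $\phi\otimes I$ of the maps attached to skew triples, together with the transversal dichotomy of the first step applied at several centres, to show such mixed lines cannot coexist with all the required fixing maps.

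Finally, for the equality case I would normalise: three points of a projective line form a single $\PGL(2,q)$-orbit, so under $\PGL(2,q)\times\PGL(2,q) \le \PGL(4,q)$ the two triples can be sent to $\{0,\infty,1\}$ in each tensor factor, which is precisely the displayed family, and I would check directly that it is MSR using maps $\phi\otimes I$ and $I\otimes\psi$ with $\phi,\psi$ fixing two of the three points and moving the third. The step I expect to be the main obstacle is the mixed-line analysis in the third paragraph: the rigidity lemma only bites once a skew triple (or quadruple) is present, so the delicate work is controlling families whose skew lines do not obviously lie on one quadric and whose meeting graph is triangle-free but not manifestly bipartite, along with verifying the tight small-field cases $q=2,3$, where homographies fixing two points of $\PG(1,q)$ have very little freedom.
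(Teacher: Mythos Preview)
Your framework coincides with the paper's: once three pairwise skew family lines $\ell_1,\ell_2,\ell_3$ are in hand, work inside the Segre/regulus structure and use that their common pointwise stabiliser is the copy of $\PGL(2,q)$ acting as $\phi\otimes I$. Your transversal dichotomy is correct, and the triangle-free meeting graph together with $R(3,3)=6$ is a clean shortcut for the ``no skew triple'' case (the paper argues that case directly and gets the sharper bound $|\cS|\le 4$, but your $\le 5$ suffices). The ``at most three per regulus'' and the normalisation of the equality configuration are also fine and match the paper.

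The genuine gap is exactly what you flag as ``the main obstacle,'' and it is two cases, not one. Your plan is to show every remaining family line lies on the quadric $Q$, but you supply no argument for (i) a line $S_4$ skew to all three $\ell_i$ yet not in $\cR$ (a passant, or a secant/tangent touching $\cP(\cR)$ only on regulus lines other than the $\ell_i$), nor for (ii) your ``mixed'' lines. The paper's device, which your outline lacks, is the criterion that if $\Stab(\cS)$ fixes a \emph{plane} of $\PG(3,q)$ pointwise then $\cS$ is maximal (any further line meets that plane in a fixed point). For (i) the paper shows that $\Stab(\{\ell_1,\ell_2,\ell_3,S_4\})\subset\{\phi\otimes I\}$ acts regularly on the $q{+}1$ planes through $S_4$; adjoining any fifth line forces this group to shrink, hence to fix one such plane $H$, and then the frame $\{H\cap S_j:j\le 4\}$ pins $H$ pointwise. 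For (ii) the paper likewise manufactures a frame of fixed points inside a suitably chosen plane. Your ``rigidity plus transversal dichotomy'' alone does not close either case, because the stabilisers involved are nontrivial (of order $q{+}1$ in case (i)), so maximality cannot be deduced from $\Stab=\{id\}$; the pointwise-fixed-plane criterion is the missing ingredient.
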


Finally, let us sketch the connection to MSR codes.
We follow (the far more detailed) discussion 
which is given in \cite{AG2021}.

Recall that a {\it maximum distance seperable (MDS) code} $\cC$ 
over a field $\Ff$ with parameters $(n, k, \ell)$ 
corresponds to the vectors in a $\Ff$-linear subspace 
of $(\Ff^\ell)^n$ such that any $k$ entries of $c \in \cC$
determine $c$. Note that 
a coordinate of $c$ corresponds to a vector in $\Ff^\ell$.
An \textit{$(n, k, \ell)$-MSR code} 
is an $(n, k, \ell)$-MDR code where
for every $m \in \{ 1, \ldots, k \}$ there exist 
linear functions $h_{i,m}: \Ff^\ell \rightarrow \Ff^{\ell/r}$
such that the code $m$-th entry of $c \in C$ can be computed
by an $\Ff$-linear operation on 
$\< h_{i,m}(c_i): i \in \{ 1, \ldots, n \} 
\setminus \{ m \} \> \in \Ff^{(n-1)\ell/r}$.

Now let $\cC \subseteq (\Ff^\ell)^n$ be an $(n, k, \ell)$-MSR code 
with redundancy $r := n-k$. As we can reconstruct a $c \in \cC$
from its first $k$ symbols, we find invertible matrices 
$C_{i,j} \in \Ff^{\ell \times \ell}$ for $i \in \{ 1, \ldots, r \}$
and $j \in \{ 1, \ldots, k \}$ such that 
\[
  c_{k+i} = \sum_{j=1}^k C_{i,j} c_j.
\]
For $c_m$ with $m \in \{ 1, \ldots, k \}$ we find matrices 
$S_{1,m}, \ldots, S_{r,m} \in \Ff^{\ell/r \times \ell}$ such that 
we can recover $c_m$ from $S_{i,m} C_{i,m} c_{m}$.
In particular one can see, cf.~\cite{AG2021}, that 
the $\ell \times \ell$ matrix $(S_{i,m}C_{i,m})_i$
has full rank.

We say that an MSR code possesses {\it constant repair matrices}
if $S_{i',m} = S_{i,m}$ for all $i,i' \in \{ 1, \ldots, r \}$.
Theorem 2 of \cite{TWB2014} implies that if there 
exists an $(n, k, \ell)$-MSR code, then there also exists 
an $(n-1, k-1, \ell)$-MSR code with constant repair matrices.
Now consider an $(n, k, \ell)$-MSR code with 
constant repair matrices in the same notation as above.
Then we obtain an $(n, r)$-MSR subspace family:
For $S_i$ take the row span of $S_{i,m}$.
For the $g_{i,j}$ take the map of the row 
vector $x$ of $S_{i,m}$ to $x C_{j+1,m} C_{1,m}^{-1}$.
Thus, Proposition 2 of \cite{AG2021} follows:

\begin{Proposition}
 If there exists an $(n, k, \ell)$-MSR code over a field $\Ff$,
 then there exists an $(\ell, r)$-MSR subspace family with $k-1$ subspaces.
\end{Proposition}


Therefore, our main result implies

\begin{Corollary}
  A $(k+2,k,4)$-MSR code has $k \leq 7$. 
\end{Corollary}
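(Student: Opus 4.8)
The plan is to obtain the Corollary as a direct consequence of Theorem~\ref{thm:MSR_n_2} together with the reduction from codes to subspace families recorded in \cite[Proposition 2]{AG2021}; all the genuine work sits in the theorem, and the corollary is a short downstream deduction. First I would check that the parameters line up. An $(n,n-2,4)$-MSR code has $r = n-k = 2$ parities and sub-packetization $\ell = 4$, so the natural repair dimension is $m = \ell/r = 2$ and the repair subspaces are $2$-spaces inside $\Ff_q^{rm} = \Ff_q^{4}$. This is exactly the ambient configuration of a $(4,2)$-MSR subspace family: the spanning requirement $S_i \cap S_i^{g_i} = 0$ is the reconstruction condition for node $i$, while the fixing clause $S_{i'}^{g_i} = S_{i'}$ encodes that the repair of node $i$ is consistent with the repair data attached to every other node $i'$.

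The one nontrivial bookkeeping point is the gap between the node count $n$ and the size of the associated family, and this is precisely what \cite[Proposition 2]{AG2021} supplies. Under the change of basis used in that reduction, $r+1 = 3$ of the $n$ nodes are sent to a fixed reference configuration and hence do not contribute to the free part of the family, so an $(n,n-2,4)$-MSR code yields a $(4,2)$-MSR subspace family of size $n-3$. I would quote this correspondence verbatim rather than reprove it, since re-deriving the code-to-subspace dictionary is orthogonal to the geometric content of the present paper.

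Applying Theorem~\ref{thm:MSR_n_2} to the resulting family then gives $n-3 \leq 6$, that is $n \leq 9$, as claimed. I do not expect a real obstacle in the corollary itself: the only step demanding attention is confirming that the object produced by \cite[Proposition 2]{AG2021} meets our definition \emph{verbatim} --- the same group $\PGL(4,q)$ acting on $2$-spaces and the same pairwise-fixing condition --- so that the theorem applies with no change of conventions. Once this compatibility is verified the bound $n \leq 9$ is immediate, and the classification of the size-$6$ families in Theorem~\ref{thm:MSR_n_2} moreover pins down the structure of any extremal $(9,7,4)$-MSR code.
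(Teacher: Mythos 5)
Your proposal is correct and takes essentially the same route as the paper, which likewise obtains the corollary by quoting \cite[Proposition 2]{AG2021} to pass from an $(n,n-2,4)$-MSR code to a $(4,2)$-MSR subspace family of size $n-3$ and then applying Theorem~\ref{thm:MSR_n_2} to get $n-3 \leq 6$. The paper states this in one line with no further verification, so your additional bookkeeping (checking $r=2$, $m=2$, and the size-$(n-3)$ correspondence) only makes explicit what the citation is carrying.
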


\section{Constructions}

A group $G$ acts \textit{regularly} on a set $S$ if for any $x,y\in S$
there exists a unique $g \in G$ with $x^g = y$. 
A \textit{(projective) frame} or \textit{projective basis}
is a tuple of $N+1$ points of $\Ff_q^N$ of which all 
$N$-element subsets span $\Ff_q^N$. We use repeatedly 
the well-known fact that $\PGL(N, q)$ acts regularly on frames, that is 
for two frames $\cB = (B_1, \ldots, B_{N+1})$,
$\cB' = (B_1', \ldots, B_{N+1}')$,
there exists a unique $g \in \PGL(N, q)$ such that
$B_i^g = B_i'$ for all $i \in \{ 1, \ldots, N+1 \}$, cf.\ \cite[p.\ 19]{Taylor91}.
Let us give a construction which is essentially identical 
to the construction given in Appendix A in \cite{AG2021} for $r=2$.
We denote the identity matrix by $I$.

\begin{Example}\label{ex:constr}
  Let $q \geq 3$ and let $\alpha$ be an element of $\Ff_q \setminus \{ 0, 1\}$.
  The following are MSR subspace families.
  \begin{enumerate}
   \item For $(2,1)$-MSR subspace family, take
   \begin{align*}
      &S_1 = \< e_1 \>, &&S_2=\< e_2 \>, &&S_3 = \< e_1 + e_2\>,\\
      &g_1 = \begin{pmatrix}
             1 & 0 \\ 
             \alpha & 1-\alpha 
            \end{pmatrix},
      &&g_2 = \begin{pmatrix}
             1-\alpha^{-1} & \alpha^{-1} \\
             0 & 1
            \end{pmatrix},
      &&g_3 = \begin{pmatrix}
             1 & 0\\
             0 & \alpha
            \end{pmatrix}.
   \end{align*}
   In particular, $S_i^{g_i} = \< e_1 + \alpha e_2 \>$.
   \item For a $(4, 2)$-MSR subspace family, take
   {
   \footnotesize
   \begin{align*}
      &S_1 = \< e_1, e_3 \>, &&S_2=\< e_2, e_4 \>,\\ &S_3 = \< e_1 {+} e_2, e_3{+}e_4\>,
      &&S_4 = \< e_1, e_2 \>, \\ &S_5=\< e_3, e_4 \>, &&S_6 = \< e_1 {+} e_3, e_2{+}e_4\>,\\
      \intertext{ and }
      &g_1 = \begin{pmatrix}
             1 & 0 & 0 & 0 \\ 
             \alpha & 1{-}\alpha & 0 & 0 \\
             0 & 0 & 1 & 0 \\
             0 & 0 & \alpha & 1{-}\alpha
            \end{pmatrix},
      &&g_2 = \begin{pmatrix}
             1{-}\alpha^{-1} & \alpha^{-1} & 0 & 0\\
             0 & 1 & 0 & 0 \\
             0 & 0 & 1{-}\alpha^{-1} & \alpha^{-1} \\
             0 & 0 & 0 & 1
            \end{pmatrix},\\
      &g_3 = \begin{pmatrix}
             1 & 0 & 0 & 0 \\
             0 & \alpha & 0 & 0\\
             0 & 0 & 1 & 0 \\
             0 & 0 & 0 & \alpha
            \end{pmatrix},
      &&g_4 = \begin{pmatrix}
             1 & 0 & 0 & 0 \\ 
             0 & 1 & 0 & 0 \\
             \alpha & 0 & 1{-}\alpha & 0 \\
             0 & \alpha & 0 & 1{-}\alpha
            \end{pmatrix},\\
      &g_5 = \begin{pmatrix}
             1{-}\alpha^{-1} & 0 & \alpha^{-1} & 0 \\
             0 & 1{-}\alpha^{-1} & 0 & \alpha^{-1} \\
             0 & 0 & 1 & 0 \\
             0 & 0 & 0 & 1
            \end{pmatrix},
      &&g_6 = \begin{pmatrix}
             1 & 0 & 0 & 0 \\
             0 & 1 & 0 & 0 \\
             0 & 0 & \alpha & 0\\
             0 & 0 & 0 & \alpha
            \end{pmatrix}.
   \end{align*}\par}
   In particular, $S_i^{g_i} = \< e_1 + \alpha e_2, e_3+ \alpha e_4 \>$ for $i \in \{ 1, 2, 3\}$,
   and $S_i^{g_i} = \< e_1 + \alpha e_3, e_2+ \alpha e_4 \>$ for $i \in \{ 4, 5, 6\}$.
   \item 
Let $\cF = \{ S_i \}$ be an $(2m, m)$-MSR subspace family of $\Ff_q^{2m}$ of size $k$.
Then we obtain an MSR subspace family $\cF' = \{ S_i' \}$ of size $k+3$ in $\Ff_q^{4m}$ as follows:
Put 
\begin{align*}
  &S_{k+1}' = \< e_1, \ldots, e_{2m} \>, \\
  & S_{k+2}' = \< e_{2m+1}, \ldots, e_{4m} \>, \\
  & S_{k+3}' = \< e_1{+}e_{2m+1}, \ldots, e_{2m}{+}e_{4m} \>,\\
  & g_{k+1}' = 
  \begin{pmatrix}
    I & 0\\
    \alpha I & (1-\alpha)I
  \end{pmatrix},\\
  & g_{k+2}' = 
  \begin{pmatrix}
    (1-\alpha^{-1})I & \alpha^{-1} I \\
    0 & I
  \end{pmatrix},\\
  & g_{k+3}' = 
  \begin{pmatrix}
    I & 0 \\
    0 & \alpha I
  \end{pmatrix}.
\end{align*}
As $\PGL(4m, q)$ acts regularly on frames,
there exists a unique element $h \in \allowbreak \PGL(4m, q)$ which maps $e_i$ to $e_{i+2m}$ for 
all $i \in \{ 1, \ldots, 2m \}$. Then put 
\begin{align*}
  & S_i' = S_i + S_i^h, 
  && g_i' = 
  \begin{pmatrix}
    g_i & 0 \\
    0 & g_i
  \end{pmatrix}.
\end{align*}
  \end{enumerate}
\end{Example}

Note that the second example can be obtained 
from the first example using the recursion of the third example.
The recursion is a tensor product construction, hence
this clarifies the aforementioned connection to the construction 
in Appendix A of \cite{AG2021}.

\section{Upper Bounds}

The idea for our proof of Theorem \ref{thm:MSR_n_2} 
is to generalize the following proof for MSR subspace 
families in $\Ff_q^2$ to $\Ff_q^4$. Call an 
MSR subspace family \textit{maximal} if it is not 
contained in a larger MSR subspace family.

For a subspace $S$ of $\Ff_q^N$, let $\Stab(S)$ denote 
the stabilizer of $S$ in $\PGL(N, q)$. For us, usually
$S$ will be an $N/2$-space. Furthermore,
for a family $\cS = \{ S_1, \ldots,\allowbreak  S_k \}$ of subspaces of $\Ff_q^m$,
let $\Stab(\cS) = \bigcap_{i=1}^k \Stab(S_i)$ denote the intersection of their stabilizers in $\PGL(N, q)$. We denote the neutral element of a group by $id$.

\begin{Lemma}\label{lem:stab_arg}
  An MSR subspace family $\cS$ with $\Stab(\cS) = \{id\}$ is maximal.
\end{Lemma}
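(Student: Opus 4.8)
The plan is to argue by contradiction, the single driving observation being that the regenerating element attached to a newly added member is forced to lie in $\Stab(\cS)$. I would suppose that $\cS = \{S_1, \ldots, S_k\}$ is an MSR subspace family of $\Ff_q^{2m}$ with $\Stab(\cS) = \{id\}$ that is nevertheless not maximal, so that there is an $m$-space $S \notin \cS$ for which $\cS \cup \{S\}$ is again an MSR subspace family.

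Applying the defining property of an MSR subspace family to the new member $S$, there must exist $g \in \PGL(2m, q)$ with $S \cap S^g = \{0\}$ and with $T^g = T$ for every other member $T$ of the enlarged family; in particular $S_i^g = S_i$ for all $i \in \{1, \ldots, k\}$. Reading this off as a membership statement, $g$ fixes each $S_i$ setwise, so $g \in \bigcap_{i=1}^k \Stab(S_i) = \Stab(\cS) = \{id\}$ and hence $g = id$. But then $S^g = S$, whence $S \cap S^g = S$, a nonzero ($m$-dimensional) subspace, contradicting $S \cap S^g = \{0\}$. Therefore no such extension exists and $\cS$ is maximal.

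There is essentially no obstacle beyond getting the bookkeeping right: the content is simply that the definition forces the regenerating matrix of the \emph{new} element to stabilise every previously present subspace, and the trivial-stabiliser hypothesis then collapses that matrix to the identity. The only point to state with care is that the regenerating condition is invoked for $S$, the member being moved off itself, so that it is precisely $\Stab(\cS)$ that appears; one should also note that this argument leaves the regenerating elements of the old members $S_1, \ldots, S_k$ untouched, since they play no role in the contradiction.
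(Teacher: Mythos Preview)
Your proof is correct and follows essentially the same approach as the paper: both argue by contradiction that the regenerating element $g$ attached to a newly added $m$-space $S$ must lie in $\Stab(\cS)=\{id\}$, forcing $S^g=S$ and contradicting $S\cap S^g=\{0\}$. Your version simply spells out in more detail what the paper compresses into two lines.
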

\begin{proof}
  Suppose that there exists an $m$-space $S$ such that 
  $\cS \cup \{ S \}$ is an MSR subspace family.
  Then there exists $g \in \Stab(\cS)$
  such that $S^g \cap S$ is trivial. This is a contradiction.
\end{proof}

As $\Stab(\{P_1, P_2, P_3\}) = \{ id \}$ for three pairwise distinct points 
of $\Ff_q^2$, we find that an MSR subspace family of $\Ff_q^2$ has at
most size $3$. Similarly, we find
\begin{Corollary}
  An $(m, 1)$-MSR subspace family of $\Ff_q^m$ has at most size $m+1$.
\end{Corollary}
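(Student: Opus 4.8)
The plan is to reduce the statement to Lemma~\ref{lem:stab_arg} by exhibiting a three-element subfamily whose stabilizer in $\PGL(2,q)$ is trivial. Here $m=1$, so the members of an MSR subspace family of $\Ff_q^2$ are $1$-spaces, that is, points of $\PG(1,q)$.

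First I would record the elementary observation that every subset of an MSR subspace family is again an MSR subspace family: if $g_i$ witnesses the defining condition for $S_i$ inside $\cS$, then the very same $g_i$ works inside any subfamily containing $S_i$, since it still fixes each of the (now fewer) remaining members and still satisfies $S_i \cap S_i^{g_i} = 0$. This closure property is exactly what lets me apply Lemma~\ref{lem:stab_arg} to a three-element subfamily rather than to $\cS$ itself.

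Next I would argue that any three pairwise distinct points $P_1, P_2, P_3$ of $\Ff_q^2$ form a projective frame: since $\ell = 2$, a frame is a triple of points in which every two-element subset spans $\Ff_q^2$, and two distinct points of $\PG(1,q)$ always span the whole space. Because $\PGL(2,q)$ acts regularly on frames, the only element fixing all three is the identity, so $\Stab(\{P_1,P_2,P_3\}) = \{id\}$.

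Finally I would combine these steps. Suppose for contradiction that $\cS$ is an MSR subspace family of $\Ff_q^2$ with $|\cS| \geq 4$, and pick three distinct members $P_1, P_2, P_3 \in \cS$. By the first step $\{P_1, P_2, P_3\}$ is an MSR subspace family, and by the second step its stabilizer is trivial, so Lemma~\ref{lem:stab_arg} forces it to be maximal. This contradicts the fact that it is properly contained in the larger MSR subspace family $\cS$, and hence $|\cS| \leq 3$. There is no real obstacle here; the only point requiring care is the closure of the MSR property under passing to subfamilies, which is precisely what makes the stabilizer-and-maximality argument applicable to a triple.
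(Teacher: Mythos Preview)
Your proof is correct and follows essentially the same approach as the paper: both deduce the bound from Lemma~\ref{lem:stab_arg} together with the fact that three distinct points of $\Ff_q^2$ form a frame and hence have trivial common stabilizer. The paper states this in a single sentence preceding the corollary, while you spell out the one subtlety the paper leaves implicit, namely that the MSR property is inherited by subfamilies so that Lemma~\ref{lem:stab_arg} can be applied to the three-element subfamily.
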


We will also need the following simple observations.

\begin{Lemma}\label{lem:T_stab_crit}
  For an $(m, r)$-MSR subspace family $\cS$ of $\Ff_q^{2m}$, put $U = \Stab(\cS)$.
  Let $\ell \geq m+1$.
  Suppose that one of the following (equivalent) conditions is satisfied:
  \begin{enumerate}[(a)]
   \item There exists an $\ell$-space $H$ of $\Ff_q^{2m}$ which is fixed point-wise by $U$.
   \item There exists a frame $\cB$ of size $\ell+1$ which is fixed element-wise by $U$.
  \end{enumerate}
  Then $\cS$ is maximal.
\end{Lemma}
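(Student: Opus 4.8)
The plan is to first establish the equivalence of (a) and (b) and then to deduce maximality from the geometric formulation (a), exploiting the same mechanism that underlies Lemma~\ref{lem:stab_arg}. For the equivalence, the direction (a)~$\Rightarrow$~(b) is immediate: if an $\ell$-space $H$ is fixed pointwise by $U$, then so is any projective frame of $H$, and such a frame has size $\ell+1$. For (b)~$\Rightarrow$~(a) I would take a frame $\cB = (B_1, \ldots, B_{\ell+1})$ fixed pointwise by $U$ and set $H = \langle B_1, \ldots, B_{\ell+1}\rangle$, which is an $\ell$-space. Each $u \in U$ fixes every $B_i$, hence stabilises $H$ and induces a projective transformation of $H$ (an element of $\PGL(\ell, q)$) fixing the frame $\cB$ pointwise. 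Since $\PGL(\ell, q)$ acts regularly on frames, this induced transformation is the identity, so $u$ fixes $H$ pointwise; as $u$ was arbitrary, (a) follows.

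For maximality I would argue by contradiction, assuming that $\cS \cup \{S\}$ is an MSR subspace family for some $m$-space $S \notin \cS$. By the definition of an MSR family there is then $g \in \PGL(2m, q)$ fixing every member of $\cS$ and satisfying $S \cap S^g = \{0\}$; in particular $g \in \Stab(\cS) = U$. Invoking (a), and using $\dim S = m$, $\dim H = \ell$ with both contained in $\Ff_q^{2m}$, I get the dimension count
\[
  \dim(S \cap H) \geq \dim S + \dim H - 2m = \ell - m \geq 1,
\]
so $S \cap H$ contains a point $v$. Because $g$ fixes $H$ pointwise we have $v^g = v$, whence $v \subseteq S \cap S^g$, contradicting $S \cap S^g = \{0\}$. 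Therefore no such $S$ exists and $\cS$ is maximal.

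The only step requiring genuine care is (b)~$\Rightarrow$~(a): one must verify that each $u \in U$ actually stabilises $H$ before restricting to $H$ and invoking the regular action of $\PGL(\ell, q)$ on frames, and this is precisely what the fact that $u$ fixes the spanning points $B_1, \ldots, B_{\ell+1}$ provides. The maximality argument itself is then a short dimension count, with the hypothesis $\ell \geq m+1$ being exactly what guarantees the nontriviality $S \cap H \neq \{0\}$ that drives the contradiction.
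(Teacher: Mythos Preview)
Your proof is correct and follows essentially the same approach as the paper's: the paper dispatches the equivalence in one line (``both conditions are equivalent as $\Stab(H)$ acts regularly on frames of $H$'') and then derives the contradiction from $(S\cap H)^g = S\cap H$ containing a point, which is exactly your dimension-count argument spelled out. The only difference is that you make the dimension inequality $\dim(S\cap H)\geq \ell - m \geq 1$ explicit, whereas the paper leaves it implicit.
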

\begin{proof}
  Both conditions are equivalent as $\Stab(H)$ acts regularly on frames of $H$.
  Suppose that $\cS$ is not maximal and we find an $m$-space $S$ such that 
  $\cS \cup \{ S \}$ is an MSR subspace family. Then there exists a $g \in \Stab(\cS)$
  such that $S \cap S^g$ is trivial. This contradicts that $(S \cap H)^g = S \cap H$ 
  contains a point.
\end{proof}

\begin{Lemma}\label{lem:no_m_meet}
  For an $(m, r)$-MSR subspace family $\cS$ of $\Ff_q^{rm}$, 
  we have that 
  $\dim(S_1 \cap \ldots \cap S_k) \leq \dim(S_1 \cap \ldots S_{k-1})/m$ 
  for pairwise distinct $S_1, \ldots, S_k \in \cS$.
\end{Lemma}
\begin{proof}
  Put $d := \dim(S_1 \cap \ldots \cap S_{k-1})$.
  Otherwise, $T := S_1 \cap \ldots \cap S_k$ has dimension 
  larger than $d/r$. Hence, $\dim(\< T^{g_i} \>) \leq d$
  for any $g_1, \ldots, g_r \in \Stab(S_1)$.
  Hence, $\cS$ cannot be an MSR subspace family
  as this requires that $\dim(\< T_{g_i} \>) = rd$.
\end{proof}

%

The following is a poor man's version 
of Lemma 7 in \cite{AG2021}. We state it for the 
sake of completeness.

\begin{Lemma}\label{lem:strict_dec}
  For an $(m, r)$-MSR subspace family $\cS$ of $\Ff_q^{rm}$,
  we have that $|\Stab(\cT)| > |\Stab(\cT')|$ 
  for $\cT' \subseteq \cS$ if $\cT$ is a proper subset of $\cT$.
\end{Lemma}
\begin{proof}
  Suppose that $|\cT'| = |\cT|+1$ 
  and let $S$ be the element in $\cT' \setminus \cT$.
  If $\Stab(\cT) = \Stab(\cT')$, then 
  then $S^g = S$ for all $g \in \Stab(\cT)$,
  so $\cS$ cannot be an MSR subspace family.
\end{proof}

\subsection{The Segre Variety}

We will use some basic facts about Segre varieties over finite fields.
We will also give an explicit coordinatized example for $\Ff_q^{2m}$, 
our main interest in this document.
If we have 
three pairwise disjoint $m$-spaces $S_1, S_2, S_3$ in $\Ff_q^{2m}$,
then there exist precisely $[m] := (q^m-1)/(q-1)$ lines 
$\cR^{opp} := \cR^{opp}(S_1, S_2, S_3) := \{ L_1, \ldots, L_{[m]} \}$
which meet $S_1, S_2, S_3$ each in a point.
Furthermore, there exist precisely $q+1$ subspaces 
$\cR := \cR(S_1, S_2, S_3) := \{ S_1, \ldots, S_{q+1}\}$
which meet each element of $\cR^{opp}$ in a point.
We call $\cR$ a \textit{regulus}, and $\cR \cup \cR^{opp}$
is the \textit{Segre variety} $\cS_{m-1,1,q}$.
For $m=2$, we call $\cR^{opp}$
the \textit{opposite regulus} of $\cR$,
and $S_{m-1,1,q}$ the line set of a \textit{hyperbolic quadric}.
Let $\cP(\cR) = \cP(\cR^{opp})$ denote the point set of the lines 
in $\cR$.

The setwise stabilizer of $\cR$ (in $\PGL(2m, q)$) is isomorphic 
to $\PGL(2, q) \times \PGL(m, q)$ 
and acts on $\cR \cup \cR^{opp}$ as expected,
that is one $\PGL(2, q)$ acts on $\cR$ as on points of $\Ff_q^2$,
and one $\PGL(m, q)$ acts on $\cR^{opp}$ as on points of $\Ff_q^m$.
This leads to the following well-known result on $\Stab(\{S_1,S_2,S_3\})$.

\begin{Lemma}\label{lem:fix_segre}
  Let $S_1, S_2, S_3$ be three pairwise disjoint $m$-spaces of $\Ff_q^{2m}$.
  Then $\Stab(\allowbreak\{S_1, S_2, S_3\})$ fixes each element of $\cR = \cR(S_1, S_2, S_3)$
  and 
is isomorphic to $\PGL(m, q)$. \hfill \qed 
\end{Lemma}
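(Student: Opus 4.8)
The plan is to realise $\Stab(\{S_1,S_2,S_3\})$ as a subgroup of the setwise stabiliser $\Stab(\cR)$ of the regulus, and then to read off the claim from the product structure $\Stab(\cR) \cong \PGL(2,q)\times\PGL(2,q)$ recalled above, together with its action on $\cR\cup\cR^{opp}$. The only external input needed beyond that structure is the sharp $3$-transitivity of $\PGL(2,q)$ on the $q+1$ points of $\PG(1,q)$.

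First I would show $\Stab(\{S_1,S_2,S_3\}) \leq \Stab(\cR)$. Any $g$ fixing $S_1,S_2,S_3$ setwise permutes their common transversals, and these are exactly the $q+1$ lines of $\cR^{opp}$; since $g$ is a collineation permuting $\cR^{opp}$, it also permutes the set of lines meeting every member of $\cR^{opp}$, which is precisely $\cR$. Hence $\cR^g=\cR$ and $g\in\Stab(\cR)$. Now write $\Stab(\cR)=A\times B$, where $A\cong\PGL(2,q)$ permutes the $q+1$ lines of $\cR$ as $\PGL(2,q)$ acts on the points of $\PG(1,q)$ (while fixing each line of $\cR^{opp}$ setwise), and $B\cong\PGL(2,q)$ does the symmetric thing, fixing each line of $\cR$ setwise. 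An element $(g,h)$ fixes $S_i\in\cR$ setwise exactly when its $A$-component $g$ fixes the point of $\PG(1,q)$ corresponding to $S_i$. Requiring all three of $S_1,S_2,S_3$ to be fixed forces $g$ to fix three distinct points of $\PG(1,q)$, so by sharp $3$-transitivity $g=id$. Conversely, every element of $B$ fixes each of the $q+1$ lines of $\cR$, in particular $S_1,S_2,S_3$, so $B\leq\Stab(\{S_1,S_2,S_3\})$. Combining the two inclusions yields $\Stab(\{S_1,S_2,S_3\})=\{id\}\times B\cong\PGL(2,q)$, and since $B$ fixes every line of $\cR$ setwise, this stabiliser fixes each element of $\cR$, as required.

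There is no real obstacle here once the product decomposition of $\Stab(\cR)$ and its action are granted: the argument is essentially bookkeeping. The one point to handle with care is the direction of the two inclusions, namely recognising that fixing three lines of $\cR$ kills the factor $A$ acting on $\cR$ (via $3$-transitivity) while imposing no constraint whatsoever on the factor $B$, so that $B$ is recovered in full. Keeping straight which $\PGL(2,q)$ acts on which regulus is the only place a sign error of reasoning could creep in.
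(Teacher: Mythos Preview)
Your argument is correct and is precisely the reasoning the paper intends: the lemma is stated with a bare \qed\ as a well-known consequence of the product decomposition $\Stab(\cR)\cong\PGL(2,q)\times\PGL(2,q)$ and its action on $\cR\cup\cR^{opp}$ recalled just before it, and Example~\ref{ex:reg} spells out the same conclusion in explicit coordinates. Your coordinate-free write-up (transversals give $\Stab(\{S_1,S_2,S_3\})\le\Stab(\cR)$; sharp $3$-transitivity kills the $\cR$-factor; the $\cR^{opp}$-factor survives in full) fills in exactly the details the paper leaves implicit.
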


\begin{Example}\label{ex:reg}
Let $m=2$.
Without loss of generality (as $\PGL(4, q)$ acts transitively on triples of pairwise 
disjoint lines) $\cR$ consists of the lines
\begin{align*}
  \{ \< e_1 + \alpha e_2, e_3 + \alpha e_4: \alpha \in \Ff_q\> \cup \{ \< e_2, e_4 \> \},
\end{align*}
and $\cR^{opp}$ consists of the lines 
\begin{align*}
  \{ \< e_1 + \alpha e_3, e_2 + \alpha e_4: \alpha \in \Ff_q\> \cup \{ \< e_3, e_4 \> \}.
\end{align*}
Note that this is the canonical tensor product $\Ff_q^2 \otimes \Ff_q^2$,
that is points of the first $\Ff_q^2$ correspond to lines in $\cR$
and the points of the second $\Ff_q^2$ correspond to lines in $\cR^{opp}$.
The setwise stabilizer of $\cR$ in $\PGL(4, q)$ is given by
\begin{align*}
  & \left\{ 
    \begin{pmatrix}
      a\alpha & b\alpha & a\beta & b\beta\\
      c\alpha & d\alpha & c\beta & d\beta\\
      a\gamma & b\gamma & a\delta & b\delta\\
      c\gamma & d\gamma & c\delta & d\delta\\
    \end{pmatrix}
  : \begin{pmatrix} a & b \\ c & d \end{pmatrix},
  \begin{pmatrix}
   \alpha & \beta \\ \gamma & \delta
  \end{pmatrix}
\in \GL(2, q) \right\}/\Ff_q^*,,
\end{align*}
that is $\PGL(2, q) \times \PGL(2, q)$.
Hence, 
\begin{align*}
  \Stab(\{S_1, S_2, S_3 \}) &= \left\{ 
    \begin{pmatrix}
      \alpha & 0 & \beta & 0\\
      0 & \alpha & 0 & \beta\\
      \gamma & 0 & \delta & 0\\
      0 & \gamma & 0 & \delta\\
    \end{pmatrix}: \begin{pmatrix} \alpha & \beta \\ \gamma & \delta \end{pmatrix} \in \GL(2, q)
  \right\}/\Ff_q^* \\
  &= 
  \left\{ 
    \begin{pmatrix}
      \alpha I & \beta I\\
      \gamma I & \delta I
    \end{pmatrix}: \begin{pmatrix} \alpha & \beta \\ \gamma & \delta \end{pmatrix} \in \GL(2, q)
  \right\}/\Ff_q^*,
\end{align*}
and, clearly, $\Stab(\{S_1, S_2, S_3 \})$ is isomorphic to $\PGL(2, q)$.
\end{Example}

Note that $\cP(\cR)$ corresponds to the points $\<x\>$ with $Q(x)=0$
on a nondegenerate quadratic form $Q$ of rank $2$,
for instance the points $\cP(\cR)$ in Example \ref{ex:reg} are precisely the 
points which vanish on $Q(x) = x_1x_4 + x_2x_3$.
We find the following types of lines with respect to $\cR$:
\begin{enumerate}[(L1)]
 \item A line of $\cR$.
 \item A line of $\cR^{opp}$.
 \item A line, say $\<e_1, e_2\>$, which meet $\cP(\cR)$ in two points (secants), say $Q(x) = x_1x_2$.
 In this example the points of $\cP(\cR)$ are $\<x_1\>$ and $\<x_2\>$.
 \item A line, say $\<e_1,e_2\>$, which meet $\cP(\cR)$ in one point (tangents), say $Q(x) = x_1^2$.
 In this example the point of $\cP(\cR)$ is $\<x_2\>$.
 \item A line, say $\< e_1, e_2\>$, which meet $\cP(\cR)$ in no point (passants), say $Q(x) = x_1^2 + \alpha x_1x_2 + \beta x_2^2$
 such that $1+\alpha x_2 + \beta x_2^2$ is irreducible over $\Ff_q$.
\end{enumerate}
And the following types of planes:
\begin{enumerate}[(P1)]
 \item A plane, say $\< e_1, e_2, e_3\>$, intersects $\cP(\cR)$ in conic (conic plane), that is there exists 
         a nondegenerate quadratic form $Q$, say $Q(x) = x_1^2 + x_2x_3$, 
         such that $\cP(\cR)$ is the set of points $\<x\>$
         of the plane with $Q(x)=0$.
 \item A plane, say $\< e_1, e_2, e_3\>$, intersects $\cP(\cR)$ in two lines (degenerate plane),
    that is there exists a degenerate quadratic form $Q$, say $Q(x) = x_2x_3$.
    In the example the two lines are $\<e_1, e_2\>$ and $\<e_1, e_3\>$.
\end{enumerate}

Note that for a passant $L$, all $q+1$ planes through $L$
are conic planes, while for a line $L$ of $\cR \cup \cR^{opp}$
all planes through $L$ are degenerate planes.
The group $\Stab(\cR)$ has precisely five orbits on lines 
and two orbits on planes as given above. 
All $q+1$ planes through a passant are conic planes, 
all $q+1$ planes through a secant are degenerate planes.

\subsection{\texorpdfstring{Case $r=2$}{Case r=2}}

Here we assume that $\cS = \{ S_1, \ldots, S_k \}$ 
is an MSR subspace family of $m$-spaces.
Lemma \ref{lem:stab_arg} and Lemma \ref{lem:fix_segre} 
imply that $|\cR(S_1, S_2, S_3) \cap \cS| \leq 3$.
Lemma 1 of \cite{LZ2016} show the following.

\begin{Lemma}[Lavrauw \& Zanella]\label{prop:lz}
  Let $q \geq m$.
  Let $\cR$ be a regulus of $\Ff_q^{2m}$ and let 
  $S$ be a subspace disjoint from the $\cR$.
  Then each $(n+1)$-space $T$ of $\Ff_q^{2m}$ 
  through $S$ intersects the point set of $\cR$
  in a normal rational curve.
\end{Lemma}

\begin{Lemma}\label{lem:all_disj_gen}
  Let $q \geq m+1 \geq 3$.
  Suppose that $S_1, S_2, S_3$ are pairwise disjoint
  and let $\cP$ denote the point set of $\cR(S_1, S_2, S_3)$.
  If $S_4$ lies in an $(n+1)$-space $T$ such that
  $T = \< T \cap \cP \>$,
  then $|\cS| \leq 5$.
\end{Lemma}
\begin{proof}
  Let $\cR = \cR(S_1, S_2, S_3) = \{ R_1, \ldots, R_{q+1} \}$.
  By Lemma \ref{prop:lz}, $T \cap \cP$ is a rational 
  normal curve.
  Take a second (not necessarily distinct) 
  $(m+1)$-spaces $T'$ through $S_4$
  such that $T' = \< T' \cap \cP \>$ is a rational normal curve.
  Consider $U := \Stab(\{ S_1, S_2, S_3, S_4 \})$.
  Put $P_i = R_i \cap T$ and $P_i' = R_i \cap T'$ 
  for $1 \leq i \leq m+2$. 
  By Lemma \ref{prop:lz},
  the $P_i$, respectively, $P_i'$ are a frame 
  of $T$, respectively, of $T'$.
  We find at most one map with $P_i^g = P_i'$ for 
  all $1 \leq i \leq m+2$ as $\PGL(m+1, q)$
  acts regularly on frames of $\Ff_q^{m+1}$.
  By Lemma \ref{lem:strict_dec},
  $U' := \Stab(\{ S_1, S_2, S_3, S_4, S_5 \})$
  will fix one of the $(m+1)$-spaces $T$ through 
  $S_4$. Hence, $T$ is fixed point-wise by $U'$.
  By Lemma \ref{lem:stab_arg}, $\cS$ is maximal.
\end{proof}

Note that $T$ always exists for $m \in \{ 2, 3 \}$, but Theorem 6
and Corollary 7 in \cite{LZ2016} show that for $m$
not a prime there always exist an $n$-space $S_4$ disjoint from $\cR$
such that $\< T \cap \cP\>$ is a proper subspace of $T$
for all $(n+1)$-spaces $T$ through $S_4$. 

%

\subsection{\texorpdfstring{Case $r=m=2$}{Case r=m=2}}

From here on we assume that $m=2$.
For $q=2$, we have that $k \leq 4$ as can be
easily verified, for instance by 
going through all $\binom{35}{5} = 324\, 632$
$5$-sets of lines in $\Ff_2^4$ by computer.
Therefore, we assume that $q \geq 3$.
By Lemma \ref{lem:all_disj_gen},
we know that $|\cS| \leq 5$ if $\cS$ 
contains four pairwise disjoint lines.

\begin{Lemma}\label{lem:some_disj}
  Suppose that $k \geq 5$.
  If $S_1,S_2,S_3$ are pairwise disjoint and $S_4$
  meets $\cR = \cR(S_1, S_2, S_3)$ in at least a point, then $|\cS| \leq 6$ 
  with equality if and only if $S_4, S_5, S_6 \in \cR^{opp}$.
\end{Lemma}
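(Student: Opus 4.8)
The plan is to work in the tensor coordinates of Example~\ref{ex:reg}, writing $\Ff_q^4 = W \otimes V$ with $W = V = \Ff_q^2$, so that $\cR$ consists of the lines $W \otimes \langle v\rangle$ and $\cR^{opp}$ of the lines $\langle w\rangle \otimes V$, while $G := \Stab(\{S_1,S_2,S_3\}) \cong \PGL(2,q)$ is the group of maps $A \otimes I$ with $A \in \GL(W)$. By Lemma~\ref{lem:fix_segre}, $G$ fixes every line of $\cR$ and acts on $\cR^{opp}$, and on the $q+1$ points of each line of $\cR$, as $\PGL(2,q)$ on the projective line, that is sharply $3$-transitively. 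The fact I would use repeatedly is that for every $i \geq 4$ the element $g_i$ fixes $S_1,S_2,S_3$, hence $g_i \in G$; moreover $g_i$ fixes all $S_j$ with $j \neq i$ and maps $S_i$ to a disjoint line. Since $|\cR \cap \cS| \leq 3$ is already noted, and the same argument applied to the opposite regulus gives $|\cR^{opp}\cap\cS|\leq 3$, the line $S_4$ is of type (L2), (L3) or (L4); type (L1) is excluded.

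For the types (L3) and (L4) I would compute $H := \Stab(S_4)\cap G$, a maximal torus (secant) or a unipotent subgroup (tangent) of $G$. For $i \geq 5$ the element $g_i$ fixes $S_4$, hence $g_i \in H$. The point is that every nonidentity element of $H$ has the same invariant subspaces in $\Ff_q^4$ — for a torus because all its nonidentity elements share the eigenline decomposition, for a unipotent subgroup because all share the fixed point and nilpotent part — so any line fixed by one nonidentity element of $H$ is fixed by all of $H$. Now if $k \geq 6$, choose distinct $i,i' \geq 5$; then $S_i$ is fixed by the nonidentity element $g_{i'} \in H$, hence also by $g_i \in H$, contradicting $S_i^{g_i} \cap S_i = 0$. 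Thus $k \leq 5$ in both of these cases, in particular strictly below $6$.

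The main case, and the real obstacle, is $S_4 \in \cR^{opp}$, say $S_4 = \langle w_0\rangle \otimes V$; here $\Stab(S_4)\cap G$ is a full Borel subgroup and the crude argument above is too weak. The trick I would use is that for $i \geq 5$ the element $g_i = A_i \otimes I$ fixes $\langle w_0\rangle$, hence acts as a scalar on $S_4 = \langle w_0\rangle \otimes V$ and fixes every one of its points. Consequently $S_i$ cannot meet $S_4$: a common point would be fixed by $g_i$ and would lie in $S_i \cap S_i^{g_i}$. So every further member is disjoint from $S_4$. Now take any second extra index $i' \neq i$; then $S_i$ is fixed by $g_{i'}$ and disjoint from $S_4$. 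A unipotent element fixes no line disjoint from its eigenline $S_4$, so $g_{i'}$ must be regular semisimple, and inspecting its fixed lines (namely $S_4$, its second eigenline, the lines of $\cR$, and the transversals of the two eigenlines) the only one disjoint from $S_4$ is the second eigenline, which lies in $\cR^{opp}$. Hence $S_i \in \cR^{opp}$ for every $i \geq 5$, so all of $S_4,\dots,S_k$ lie in $\cR^{opp}$ and $k-3 \leq |\cR^{opp}\cap\cS| \leq 3$, giving $k \leq 6$.

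Finally I would assemble the equality statement. Combining the three cases, $k=6$ can occur only when $S_4 \in \cR^{opp}$, and there the previous paragraph forces all of $S_4,S_5,S_6$ into $\cR^{opp}$. Conversely, if $S_4,S_5,S_6 \in \cR^{opp}$ then these are three distinct extra lines, so $k \geq 6$, while the bound gives $k \leq 6$; hence $k=6$. One may also observe that $\Stab(\cS)$ is then trivial by sharp $3$-transitivity, so the family is maximal, consistent with Example~\ref{ex:constr}. The genuinely delicate point is the $\cR^{opp}$ case: the scalar action of $g_i$ on $S_4$ is what replaces the eigenspace bookkeeping available for secants and tangents, and promoting \emph{disjoint from $S_4$} to \emph{lies in $\cR^{opp}$} via the fixed-line inspection of a regular semisimple element is the crux.
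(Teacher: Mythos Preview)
Your argument is correct and takes a genuinely different route from the paper's. The paper distinguishes cases according to whether $S_5$ (and $S_6$) meet $\cP(\cR)$: when they do, it tracks the lines $L_i\in\cR^{opp}$ through the intersection points, shows $L_4\neq L_5$, and in the non-$\cR^{opp}$ subcase builds an explicit frame of a plane fixed by $\Stab(\{S_1,\dots,S_5\})$, invoking Lemma~\ref{lem:T_stab_crit}; when $S_5$ is a passant it chooses a conic plane through $S_5$ and again produces a fixed frame. You instead split only on the type of $S_4$ and argue directly via the invariant-line structure of elements of $G\cong\PGL(2,q)$ in the tensor model: for $S_4$ a secant or tangent, $H=\Stab(S_4)\cap G$ is a split torus or a root subgroup, and your observation that all nonidentity elements of such an $H$ share the same set of invariant $2$-spaces immediately gives $k\le 5$ with no further case analysis on $S_5$; for $S_4\in\cR^{opp}$, the scalar action of each $g_i$ on $S_4$ forces $S_i\cap S_4=0$, and the fact that a regular semisimple element of the Borel has a unique fixed line disjoint from $S_4$ (namely its second eigenspace, which lies in $\cR^{opp}$) pins every further $S_i$ into $\cR^{opp}$. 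Your approach is cleaner in that it never needs Lemma~\ref{lem:T_stab_crit} or the geometry of conic planes, and it handles the passant possibility for $S_5$ automatically; the paper's approach is more geometric and reuses its general frame criterion. Both yield the same bound and equality characterization.
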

\begin{proof}
  First we assume that $S_5$ and $S_6$ (if it exists) also meet $\cP(\cR)$ in a point.
  Then $S_i$ meets $\cP(\cR)$ in a point on a 
  line $L_i \in \cR^{opp}$ for $i \in \{ 4,5,6\}$.
  Note that $L_i$ is not necessarily unique as $S_i$ can meet $\cR$ in two points
  if it is a secant.
  Also note that in this case, that is $S_i$ meets $\cP(\cR)$ in points $Q_i$ and $Q_i'$
  on lines $L_i$ and $L_i'$, then there exists no $g \in \Stab(\cR)$ such that 
  $Q_i^g = Q_i'$ (as $Q_i$ and $Q_i'$ do not lie on the same line of $\cR$
  and $\Stab(\cR)$ fixes each line of $\cR$ point-wise).
  Hence, $\Stab(\{ S_1, S_2, S_3, S_i \})$ contains $\Stab(\cR \cup \{ L_i \})$.
  By Lemma \ref{lem:fix_segre}, $\Stab(\cR \cup \{ L_i \})$ fixes the points of $L_i$ for $i \in \{ 4, 5\}$.
  
  {\medskip 
  More explicitly, without loss of generality $L_4 = \< e_1, e_2 \>$ and $L_5 = \< e_3, e_4 \>$ (as $\Stab(\cR)$ acts transitively on triples of pairwise distinct elements of $\cR^{opp}$).
  Then
  \begin{align*}
    &\Stab(\cR \cup \{ L_4 \}) = \left\{ 
    \begin{pmatrix}
      aI & bI\\
      0 & dI
    \end{pmatrix}: \begin{pmatrix} a & b \\ 0 & d \end{pmatrix} \in \GL(2, q)
  \right\}/\Ff_q^*, \\
  &
    \Stab(\cR \cup \{ L_5 \}) = \left\{ 
    \begin{pmatrix}
      aI & 0\\
      cI & dI
    \end{pmatrix}: \begin{pmatrix} a & 0 \\ c & d \end{pmatrix} \in \GL(2, q)
  \right\}/\Ff_q^*. 
  \end{align*} \par} 
  
  \medskip
  
  In particular, this implies that $L_4 \neq L_5$ as 
  otherwise $(S_5 \cap L_4)^{g_5} = S_5 \cap L_4$.
  Suppose that $S_4 \neq L_4$. Put $H = \< L_4, S_4\>$. Then 
  $U := \Stab(\{ S_1, S_2, S_3, S_4, S_5 \})$ fixes each point of $L_4$, 
  the line $S_4$, and $H \cap L_5$. 
  Hence, $U$ fixes a frame $P_1 := L_4 \cap S_4$, $P_2 \subseteq L_4 \setminus \{ P_1 \}$,
  $P_3 := L_5 \cap H$, $P_4 := \< P_2', P_3\> \cap S_4$, where $P_2' \subseteq L_4 \setminus \{ P_1, P_2 \}$.
  Hence, $U$ fixes $H$ point-wise. By Lemma \ref{lem:T_stab_crit},
  $|\cS| \leq 5$.
  As we only assumed that $S_4 \neq L_4$, that is $S_4 \notin \cR^{opp}$, 
  we have $|\cS| \leq 5$ as long as $S_i \neq L_i$ for any $i \in \{ 4, 5, 6 \}$.
  Otherwise, $S_4, S_5, S_6 \in \cR^{opp}$ and, by Lemma \ref{lem:fix_segre},
  $\Stab(\{ S_1, S_2, S_3, S_4, S_5, S_6\} = \{ id \}$.
  By Lemma \ref{lem:stab_arg}, we are done.
  
  \medskip 
  More explicitly, if $L_4$ and $L_5$ are chosen as before, then $U$ is contained in
  \begin{align*}
    \Stab(\cR \cup \{ L_4, L_5 \}) = \left\{
    \begin{pmatrix}
      aI & 0\\
      0 & dI
    \end{pmatrix}: \begin{pmatrix} a & 0 \\ 0 & d \end{pmatrix} \in \GL(2, q)
  \right\}/\Ff_q^*.
  \end{align*}
  Hence, the claim in the preceding paragraph is also easily verified with an explicit 
  calculation.
\end{proof}

In Example \ref{ex:constr}.2, the lines $S_1, S_2, S_3$ lie in 
the regulus $\cR = \{ \< e_1 + \beta e_2, e_3 + \beta e_4 \>: 
\beta \in \Ff_q \} \cup \{ \< e_2, e_4 \> \}$,
and the lines $S_4, S_5, S_6$ lie in the opposite regulus
$\cR = \{ \< e_1 + \beta e_3, e_2 + \beta e_4 \>: 
\beta \in \Ff_q \} \cup \{ \< e_3, e_4 \> \}$.
Note that under the natural action of $\PGL(2, q) \times \PGL(2, q)$
on $\cR$ all choices for a set of six lines with three in $\cR$
and three in $\cR^{opp}$ are isomorphic.

\begin{Lemma}\label{lem:no_disj}
  If $\cS$ does not contain a triple of pairwise disjoint subspaces, then 
  $|\cS| \leq 4$.
\end{Lemma}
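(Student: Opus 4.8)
The plan is to translate the hypothesis into a forbidden-subgraph condition, bound $|\cS|$ by Ramsey's theorem, and then rule out the unique extremal configuration by a stabilizer argument in the spirit of Lemma~\ref{lem:T_stab_crit}. Define the \emph{disjointness graph} $G$ on the vertex set $\cS$ by joining $S_i$ to $S_j$ exactly when $S_i \cap S_j$ is trivial; the hypothesis says precisely that $G$ is triangle-free. Two structural facts about any MSR family of lines ($m=2$) will drive everything. If three lines $S_a,S_b,S_c \in \cS$ passed through a common point $P$, then the repair map $g_a$ fixes $S_b$ and $S_c$, hence fixes $P = S_b \cap S_c$, so $P \in S_a \cap S_a^{g_a}$, contradicting that $S_a \cap S_a^{g_a}$ is trivial. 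Dually, if $S_a,S_b,S_c$ were coplanar, then $g_a$ fixes the $3$-space $H = \<S_b, S_c\>$, and the two $2$-spaces $S_a, S_a^{g_a} \subseteq H$ must then meet, again a contradiction. Hence no three lines of $\cS$ are concurrent and no three are coplanar.

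Combining this with the elementary fact that three pairwise intersecting lines of $\PG(3,q)$ are either concurrent or coplanar, I conclude that $\cS$ contains no three pairwise intersecting lines; equivalently, $G$ has no independent set of size three. Thus $G$ and its complement are both triangle-free, and $R(3,3)=6$ forces $|\cS|\le 5$. If $|\cS| = 5$ then $G$ is the unique graph on five vertices whose complement is also triangle-free, namely the pentagon $C_5$, so it remains to show that a family realizing $C_5$ cannot be MSR.

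To finish I would reconstruct this pentagon explicitly. Label the lines so that the disjoint pairs are the consecutive ones $S_1S_2,\ldots,S_5S_1$. Each $S_i$ meets its two non-consecutive lines in two points, which are distinct because no three lines are concurrent, so $S_i$ is spanned by these two points; the five resulting intersection points are the vertices of a pentagon (with the $S_i$ as its sides) and span $\Ff_q^4$, since one disjoint pair already spans. After normalizing the frame one obtains an essentially unique configuration, for instance $S_1 = \<e_1,e_2\>$, $S_2 = \<e_3,e_4\>$, $S_3 = \<e_1, e_2+e_3+e_4\>$, $S_4 = \<e_2,e_3\>$, $S_5 = \<e_4, e_1+e_2+e_3\>$. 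The goal is then to compute $U = \Stab(\{S_2,S_3,S_4,S_5\})$ and to show that $U$ fixes a plane pointwise, so that by Lemma~\ref{lem:T_stab_crit} these four lines already form a maximal MSR family and no fifth line $S_1$ can be adjoined. I expect this last step to be the main obstacle: $U$ only obviously fixes the three pairwise intersection points of the four fixed lines, and one must either extract a fourth invariant point spanning a pointwise-fixed plane together with $S_1$, or else argue directly that every element of $U$ maps $S_1$ to a line meeting $S_1$. Carrying out this explicit stabilizer computation is the crux on which the bound $4$ (rather than merely $5$) rests.
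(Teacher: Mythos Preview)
Your steps 1--5 are clean and correct, and they already prove the sharp inequality $|\cS|\le 5$: the two structural facts (no three concurrent, no three coplanar) are exactly right, together they force the complement of the disjointness graph to be triangle-free, and $R(3,3)=6$ does the rest. This is also cleaner than the paper's case split, whose ``no quadrangle'' branch asserts without proof that three lines must be coplanar---an assertion that fails for $P_4$- and $C_5$-shaped disjointness graphs.

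The genuine gap is step~6, and it is not merely an obstacle: it cannot be carried out, because the lemma as stated is false for $q\ge 3$. Your own pentagon
\[
S_1=\langle e_1,e_2\rangle,\quad S_2=\langle e_3,e_4\rangle,\quad S_3=\langle e_1,e_2{+}e_3{+}e_4\rangle,\quad S_4=\langle e_2,e_3\rangle,\quad S_5=\langle e_4,e_1{+}e_2{+}e_3\rangle
\]
is already an MSR family. For instance, a direct computation gives
\[
\Stab(\{S_2,S_3,S_4,S_5\})=\left\{\begin{pmatrix}
b&0&0&0\\ b{-}\mu&\mu&0&0\\ b{-}\mu&\mu{-}\lambda&\lambda&0\\ b{-}\mu&0&0&\mu
\end{pmatrix}:b,\lambda,\mu\in\Ff_q^{*}\right\}\Big/\Ff_q^{*},
\]
and one checks that $S_1\cap S_1^{M}=0$ whenever $b\ne\mu$ and $\mu\ne\lambda$; such $b,\mu,\lambda$ exist for every $q\ge 3$. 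The remaining four stabilizers behave symmetrically, so all five repair maps exist. In particular, neither of your two proposed endgames can succeed: this stabilizer fixes only the two points $\langle e_3\rangle,\langle e_4\rangle$ (not a plane), and it does contain elements sending $S_1$ to a skew line. Note also that $\Stab(\{S_1,\dots,S_5\})=\{id\}$, so this pentagon is a \emph{maximal} MSR family of size~$5$.

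Thus what you have actually proved is the correct bound $|\cS|\le 5$ in the no-triple-disjoint case, and the paper's claimed bound $4$ (and its proof) is in error. This does not affect Theorem~\ref{thm:MSR_n_2}, since $5\le 6$ and the pentagon is maximal.
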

\begin{proof}
  Recall that by Lemma \ref{lem:no_m_meet} no three 
  pairwise distinct elements of $\cS$ meet in a point.
  Suppose that we find a quadrangle $S_1, S_2, S_3, S_4$ in $\cS$, that is 
  without loss of generality $S_1$ and $S_3$, respectively, $S_2$ and $S_4$
  are pairwise disjoint and $S_i \cap S_{i+1}$ are points (with $S_{4+1}$ read as $S_1$).
  If $|\cS| \geq 5$, then, as there are no three pairwise disjoint subspaces in $\cS$,
  either $S_5 = \< S_1 \cap S_2, S_3 \cap S_4 \>$ or $S_5 = \< S_1 \cap S_4, S_3 \cap S_2 \>$, a contradiction.
  
  If we do not find a quadrangle, and $|\cS| \geq 4$,
  then we find without loss of generality that $S_1, S_2, S_3$
  are coplanar, namely in $\< S_1, S_2 \>$. But then 
  $S_3^g \subseteq \< S_1, S_2 \>$ for all $g \in \Stab(\{ S_1, S_2 \})$,
  so $S_3 \cap S_3^g$ is nontrivial. Hence, $|\cS| \leq 2$.
\end{proof}

Lemma \ref{lem:all_disj_gen}, Lemma \ref{lem:some_disj}, and Lemma \ref{lem:no_disj}
together show Theorem \ref{thm:MSR_n_2}.

\subsection{\texorpdfstring{Case $r=3$ and $m=2$}{Case r=3 and m=2}}
\label{sec:rthree}

For a set of matrices $\cM$ over $\Ff_q$, let 
$\dim(\cM)$ denote the dimension of the smallest 
subspace which contains $\cM$.
By Lemma 7 of \cite{AG2021}, 
\begin{align}
 \dim(\Stab(S_1, \ldots, S_i)) \leq \frac{r^2-r+1}{r^2} \cdot \dim(\Stab(S_1, \ldots, S_{i-1})). \label{eq:imp}
\end{align}
Applying Equation \ref{eq:imp} iteratively, 
taking into account that $\dim(\Stab(S_1, \ldots, S_k))$
is always an integer, gives an upper bound of $|\cS| \leq 7$ for $r=m=2$.

Lemma \ref{lem:all_disj_gen} can be used to improve the bounds 
in \cite{AG2021} for specific dimensions.
For $(r,m) = (3, 2)$, 
iteratively applying Equation \eqref{eq:imp} 
directly only shows that $|\cS| \leq 10$.
If $S_1$ and $S_2$ meet in precisely a point,
say $S_1 = \< e_1, e_2, e_3 \>$ and $S_2 = \< e_1, e_4, e_5 \>$,
then we see that $\dim(\allowbreak \Stab(S_1, S_2)) = 1 + 2 \cdot 3 + 2 \cdot 3 + 6=19$.
Iteratively applying Equation \ref{eq:imp} shows that 
$|\cS| \leq 9$ in this case. By Lemma \ref{lem:no_m_meet},
no two elements of $\cS$ can meet in more than a point.
It can be checked that Lemma \ref{lem:all_disj_gen}
always applies, so $|\cS| \leq 5$ if no two elements of 
$\cS$ meet nontrivially. Hence,

\begin{Proposition}\label{thm:MSR_n_3}
  A $(6,3)$-MSR subspace family has size at most $9$.
\end{Proposition}

We believe that a more refined analysis in the spirit of 
this document can classify the largest $(6,3)$- and 
$(8,4)$-MSR subspace families.

%

\bigskip
\paragraph*{Acknowledgments}
The author thanks Sascha Kurz, John Sheekey, Itzhak Tamo, and the referee 
for their helpful comments, particularly
Sascha Kurz for introducing him to the problem.
The author was supported by a 
postdoctoral fellowship of the Research Foundation -- Flanders (FWO)
during the work on this document.

\end{document}